\newdimen\plusheight
\def\+{\;\lower\plusheight\hbox{$+$}\;}  
\newdimen\minusheight
\def\-{\;\lower\minusheight\hbox{$-$}\;}
\newdimen\cdotsheight
\def\cds{\lower\cdotsheight\hbox{$\cdots$}}
\newcommand{\norm}[1]{\left\lVert#1\right\rVert}
\numberwithin{equation}{section}
\theoremstyle{plain}
\newtheorem{thm}{Theorem}[section]
\newtheorem{lem}[thm]{Lemma}
\newtheorem{prop}[thm]{Proposition}
\theoremstyle{definition}
\newtheorem{defn}{Definition}[section]
\newtheorem{exmp}{Example}[section]
\theoremstyle{remark}
\newtheorem{rem}{\bf{Remark}}
\newtheorem*{note}{\bf{Note}}
\numberwithin{equation}{section}
\begin{document}
\setcounter {page}{1}
%---------------Title,Author,Abstract-----------------------------------------------
\title{On $I$ and $I^*$-Cauchy conditions in $C^*$-algebra valued metric spaces}

\author{Amar Kumar Banerjee and Anirban Paul}

\address[A.K.Banerjee]{Department of Mathematics, The University of Burdwan, Golapbag, Burdwan-713104, West Bengal, India.}

\email{ akbanerjee1971@gmail.com, akbanerjee@math.buruniv.ac.in }

\address[A.Paul]{Department of Mathematics, The University of Burdwan, Golapbag, Burdwan-713104, West Bengal, India.}
         
\email{ paulanirban310@gmail.com, anirbanpaul310math@gmail.com}

\begin{abstract}  The idea of $C^*$-algebra valued metric spaces was given by Z. Ma et al \cite{111} in 2014. Here we have studied the ideas of $I$-Cauchy and $I^*$-Cauchy sequences and their properties in such spaces and also we give the idea of $C^*$-algebra valued normed spaces.
\end{abstract}
\maketitle
\author{}

\textbf{Key words and phrases:}
$C^*$-algebra valued metric spaces, $C^*$-algebra valued normed spaces, $I$-convergence, $I$ and $I^*$-Cauchy sequences.   \\

\textbf {(2010) AMS subject classification :} 40A35, 54A20  \\
 
\section{Introduction}
The idea of statistical convergence of sequences of real number was introduced by Steinhaus \cite{q} and Fast \cite{f} as a generalizations of ordinary convergence of sequences of real numbers. Later many more works were done in this direction \cite{B, g, p}. In 2001, P. Kostryko et al \cite{h} introduced the idea of $I$-convergence of real sequences using the ideals of the set of natural numbers as a generalization of statistical convergence. Later in 2005, Lahiri and Das \cite{j} studied the same in a topological space and then many works were carried out in this direction \cite{2, 4, apurba, 5, paper, d, i}. In 2017, Banerjee and Mondal \cite{3} studied the same for double sequences in a topological space. Then the idea of $I$-Cauchy condition was studied by Dems \cite{dems}. Later $I^*$-Cauchy condition was studied by Nabiev et al \cite{m} and further investigated by Das et al \cite{c}.\\ 
One of the main direction in obtaining possible generalizations of these ideas in metric spaces is the study of same in a new type of spaces . In 2014, Z. Ma and L. Jiang \cite{111} introduced the concept of $C^*$-algebra valued metric spaces. The main idea of our paper deals using the set of positive elements of a unital $C^*$-algebra instead of the set of all real numbers. Obviously such spaces generalize the concepts of metric spaces as well as the cone metric spaces. In this paper we have studied on $I$ and $I^*$-Cauchy conditions in a $C^*$-algebra valued metric space and have discussed some results in this framework.\\
The idea of a cone normed space was introduced by D. Turkoglo et al \cite{s} which is a generalization of normed spaces. Based on the idea of $C^*$-algebra  we have introduced here the idea of $C^*$-algebra valued normed spaces which seems more general than the notion of cone normed spaces.\\
To begin with, we collect some definitions and basic facts on the theory of $I$-convergence in metric spaces and the theory of $C^*$-algebras, which will be needed in sequel.

\section{Preliminaries}
Recall that a Banach algebra $\mathbb{A}$ (over the field of complex numbers ) is said to be a $C^*$-algebra if there is an involution $*$ in $\mathbb{A}$ i.e, a mapping $ * : \mathbb{A}\mapsto \mathbb{A}$ satisfying $a^{**}=a$ for each $a \in \mathbb{A}$ such that, for all $a$, $b\in \mathbb{A}$ and $\lambda$, $\mu \in \mathbb{C}$ the following hold: \\
(i) $(\lambda a + \mu b)^* = \bar{\lambda} a + \bar{\mu} b;$\\
(ii) $(ab)^* = b^*a^*;$\\
(iii)$\norm{a^*a} = \norm{a}^2$\\
Note that, from (iii), it easily follows that $\norm{a}=\norm{a^*}$ for each $a\in \mathbb{A}$. Moreover if $\mathbb{A}$ contains an identity element $1_\mathbb{A}$ then $\mathbb{A}$ is said to be unital $C^*$-algebra. In the rest of the paper $\mathbb{A}$ will always be a unital $C^*$-algebra with unit $1_\mathbb{A}$ and the zero element $0_\mathbb{A}$. $\mathbb{A}_h$ will denote the set of all self-adjoint element $a$ (i.e, satisfying $a^*=a$). An element $a\in \mathbb{A}$ is called positive if $a\in \mathbb{A}_h$ and having spectrum $\sigma(a) \subset \mathbb{R}_+$, where $\mathbb{R}_+ = [0, \infty)$ and $\sigma(a)=\left\{\lambda\in \mathbb{R} : (\lambda 1_\mathbb{A} - a) \: \text{is non-invertible}\right\}$, the spectrum of $a$. We denote the set of all positive elements of $\mathbb{A}$ by $\mathbb{A^+}$. It is easy to see that $\mathbb{A^+}$ is a (closed) cone in the normed space $\mathbb{A}$ (see, Lemma 2.2.3\cite{murphy}) and thus a partial order $\preceq $ on $\mathbb{A}_h$ can be induced by $a \preceq b$ if and only if $b - a \in \mathbb{A}^+$. We now consider following simple results in $C^*$-algebra.\\
\begin{lem} (\cite{murphy})
(i) $\mathbb{A}^+= \left\{a^*a : a\in \mathbb{A}\right\}$;\\
(ii) if $a,\: b\in \mathbb{A}_h,\: a\preceq b,\: \text{and} \: c\in \mathbb{A}, \: \text{then}\: c^*ac \preceq  c^*bc$;\\
(iii) for all $a,\: b\in\mathbb{A}_h$ if $0_\mathbb{A} \preceq a \preceq b$ then $\norm{a} \leq \norm{b}$.\\
 \end{lem}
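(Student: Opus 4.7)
The plan is to handle each of the three parts separately, using standard facts about the continuous functional calculus in a unital $C^*$-algebra, namely that every self-adjoint element $a$ is positive iff $\sigma(a)\subset [0,\infty)$, that $\|a\|=\sup\{|\lambda|:\lambda\in\sigma(a)\}$ for self-adjoint $a$, and that every positive element admits a positive square root $a^{1/2}$ with $a^{1/2}\in\mathbb{A}_h$ and $(a^{1/2})^2=a$.

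For part (i), I would prove the two inclusions separately. The inclusion $\{a^*a:a\in\mathbb{A}\}\subseteq\mathbb{A}^+$ is the substantive one: given $x=a^*a$, the identity $(a^*a)^*=a^*a^{**}=a^*a$ shows $x\in\mathbb{A}_h$, and then one must argue $\sigma(x)\subset[0,\infty)$. I would use the standard decomposition $x=x_+-x_-$ into orthogonal positive and negative parts via the continuous functional calculus applied to $x$, and then derive a contradiction from $\sigma(x_-)\not\subset\{0\}$ by computing that $(ax_-^{1/2})^*(ax_-^{1/2})=x_-^{1/2}x x_-^{1/2}=-x_-^3$ is both positive (being of the form $y^*y$) and, by spectral mapping, has negative spectrum off $\{0\}$. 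The reverse inclusion $\mathbb{A}^+\subseteq\{a^*a\}$ is immediate from the existence of the positive square root: if $a\succeq 0_\mathbb{A}$ then $a=a^{1/2}\cdot a^{1/2}=(a^{1/2})^*a^{1/2}$.

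Part (ii) is a direct consequence of (i). From $a\preceq b$ we have $b-a\in\mathbb{A}^+$, so by (i) there exists $d\in\mathbb{A}$ with $b-a=d^*d$. Then
\[
c^*bc-c^*ac=c^*(b-a)c=c^*d^*dc=(dc)^*(dc),
\]
which is again of the form $y^*y$ and hence lies in $\mathbb{A}^+$ by (i), giving $c^*ac\preceq c^*bc$.

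Part (iii) I would prove by reducing to a spectral inequality. For any $x\in\mathbb{A}^+$, I first note $x\preceq\|x\|1_\mathbb{A}$, because $\sigma(\|x\|1_\mathbb{A}-x)=\{\|x\|-\lambda:\lambda\in\sigma(x)\}\subset[0,\infty)$ since $\sigma(x)\subset[0,\|x\|]$. Applying this to $b$ and using the chain $0_\mathbb{A}\preceq a\preceq b\preceq\|b\|1_\mathbb{A}$ (plus transitivity of $\preceq$), I get $\|b\|1_\mathbb{A}-a\in\mathbb{A}^+$, so $\sigma(\|b\|1_\mathbb{A}-a)\subset[0,\infty)$, which forces every $\lambda\in\sigma(a)$ to satisfy $\lambda\le\|b\|$. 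Since $a$ is self-adjoint and positive, $\|a\|=\sup\sigma(a)\le\|b\|$, completing the proof. The main obstacle throughout is part (i); once that is in place, (ii) is algebraic and (iii) is a short spectral argument.
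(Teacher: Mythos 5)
First, note that the paper itself gives no proof of this lemma: it is quoted with a citation to Murphy's book, so there is no in-paper argument to compare yours against. Your sketch follows the standard textbook route, and parts (ii) and (iii) are correct as written: (ii) is a clean algebraic consequence of (i), and (iii) correctly combines $b\preceq\norm{b}1_{\mathbb{A}}$, transitivity of $\preceq$ (which uses that $\mathbb{A}^+$ is closed under addition), and the identity $\norm{a}=\sup\sigma(a)$ for a positive element $a$.

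Part (i), however, has a genuine gap as stated. At the crucial point you argue that $(ax_-^{1/2})^*(ax_-^{1/2})$ ``is positive, being of the form $y^*y$'' --- but the assertion that every element of the form $y^*y$ is positive is exactly the statement you are in the middle of proving, so the argument is circular. The standard repair (and the one in Murphy) is to first establish the auxiliary fact that $-c^*c\in\mathbb{A}^+$ forces $c=0_{\mathbb{A}}$: write $c=u+iv$ with $u,v\in\mathbb{A}_h$, observe $c^*c+cc^*=2u^2+2v^2\in\mathbb{A}^+$ (squares of self-adjoint elements are positive by spectral mapping, and $\mathbb{A}^+$ is a closed cone, hence closed under sums), so $cc^*=(2u^2+2v^2)+(-c^*c)\in\mathbb{A}^+$; since $\sigma(c^*c)\setminus\{0\}=\sigma(cc^*)\setminus\{0\}$, the spectrum of $c^*c$ is simultaneously contained in $(-\infty,0]$ and $[0,\infty)$, hence equals $\{0\}$, and $\norm{c}^2=\norm{c^*c}=0$. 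Only with this lemma in hand can you apply the positive/negative decomposition to conclude $x_-=0_{\mathbb{A}}$. There is also a small computational slip: with your choice $c=ax_-^{1/2}$ one gets $c^*c=x_-^{1/2}xx_-^{1/2}=-x_-^2$, not $-x_-^3$; the exponent $3$ arises for the choice $c=ax_-$. Neither issue affects the reverse inclusion via the positive square root, which is fine, nor your arguments for (ii) and (iii).
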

In the standard terminology used for cones in normed spaces, the property (iii) of the above lemma means that the cone $\mathbb{A}^+$ in $\mathbb{A}_h$ is normal with normal constant equal to $1$. The idea of $C^*$-algebra valued metric spaces has been given in \cite{111} analogously as the metric axioms (see \cite{4.1}) as follows:
\begin{defn}\cite{111}
Let $X$ be a non-empty set. A $C^*$-algebra valued metric on $X$ is a mapping $d : X\times X \mapsto \mathbb{A}$ satisfying the following conditions:\\
(I) $0_\mathbb{A} \preceq d(x, y)$ for all $x, y\in X$ and $d(x, y) = 0_\mathbb{A}$ if and only if $x = y$.\\
(II) $d(x, y) = d(y, x)$ for all $x, y\in X$.\\
(III) $d(x,y) \preceq d(x, z) + d(z, y)$, for all $x, y, z\in X$.\\

The triplet $(X, \mathbb{A}, d)$ is called a $C^*$-algebra valued metric space.
\end{defn}
\begin{rem}
The set of positive elements in a $C^*$-algebra forms a cone in $C^*$-algebra but the underlying vector space need not to be in general a real vector space.
\end{rem}
\begin{defn}\cite{111}
Let $(X, \mathbb{A}, d)$ be a $C^*$-algebra valued metric space. Suppose that $\left\{x_n\right\} \subset X$ and $x \in X$. If for any $\varepsilon > 0$ there is a $n_{0}$ such that, $\norm{d(x_n, x)}< \varepsilon$ whenever $n\geq n_{0}$, then $\left\{x_n\right\}$ is said to be convergent with respect to $\mathbb{A} $ and $\left\{x_n\right\}$ converges to $x$ and $x$ is the limit of $\left\{x_n\right\}$. We denote this by $\displaystyle{\lim_{n\to\infty}} x_n = x$.\\
If for any $\varepsilon > 0$ there is a $n_{0}$ such that, $\norm{d(x_n, x_m)} < \varepsilon$ whenever $m, n\geq n_{0}$, then $\left\{x_n\right\}$ is called Cauchy sequence with respect to $\mathbb{A}$.
\end{defn}
The following definitions and notion will be needed.\\

\begin{defn} \cite{t} Let $X\neq\phi$. A family $I \subset 2^X$ of subsets of $X$ is said to be an ideal of $X$ provided the following conditions holds:\\
\\
$(i)\: A, B \in I \Rightarrow A\cup B\in I$\\
$(ii) \: \:A\in I, B\subseteq A \Rightarrow B\in I$
\end{defn}
Note that $\phi\in I$ follows from the condition(ii). An ideal $I$ is called nontrivial if $I \neq \left\{\phi\right\}$ and $X\not\in I$. $I$ is said to be admissible if $\left\{x\right\}\in I$ for each $x\in X$.\\
Clearly if $I$ is a nontrivial ideal then the family of \sloppy sets $F(I)=\left\{M\subset X : \text{there exists}\: A\in I, M=X\setminus A\right\} $ is a filter in $X$. It is called the filter associated with the ideal $I$.

\begin{defn} \cite{h} Let $I\subset 2^\mathbb{N}$ be a nontrivial ideal of $\mathbb{N}$ and $(X,d)$ be a metric space. A sequence $\left\{x_n\right\}_{n\in \mathbb{N}}$ of elements of $X$ is said to be $I$-convergent to $x\in X$ if for each $\varepsilon>0$ the set $A(\varepsilon)=\left\{n\in \mathbb{N}:d(x_n,x)\geq \varepsilon\right\}$ belongs to $I$.
\end{defn}

\begin{defn} \cite{h}\label{1} An admissible ideal $I \subset 2^\mathbb{N}$ is said to satisfy the condition (AP) if for every countable family of mutual disjoint sets $\left\{A_1,A_2,A_3,\cdots\right\}$ belonging to $I$ there exists a countable family of sets $\left\{B_1,B_2,B_3,\cdots\right\}$ such that $A_j\Delta B_j$ is a finite set for each $j\in\mathbb{N}$ and $B=\displaystyle{\bigcup_{j=1}^{\infty}}B_j\in I$.
\end{defn}
Note that $B_j\in I$ for each $j\in \mathbb{N}$.\\

The concepts of $I^*$-convergence which is closely related to the $I$-convergence has been given in \cite{h} as follows:

\begin{defn} \cite{h} A sequence $\left\{x_n\right\}_{n\in\mathbb{N}}$ of elements of X is said to be $\text{I}^*$\text-convergent to $x\in X$ if and only if there exists a set $M=\left\{m_1<m_2 \cdots <m_k\cdots \right\}\in F(I)$ such that $\displaystyle{\lim_{k\to\infty}}d(x,x_{m_k})=0$.
\end{defn}

In \cite{h} it is seen that $I^*$-convergence implies $I$-convergence. If an admissible ideal $I$ has the property (AP) and $(X, d)$ is an arbitrary metric space, then for arbitrary sequence $\left\{x_n\right\}$ of elements of $X$, $I$-convergence implies $I^*$-convergence.\\
Throughout the paper $(X, \mathbb{A}, d)$ or simply $X$ denote the $C^*$-algebra valued metric space, $\mathbb{A}$ being the corresponding $C^*$-algebra, $\mathbb{N}$ will denote the set of all positive integers and $I$ stands for an admissible ideal of $\mathbb{N}$, $F(I)$ denote the filter associated with $I$ unless otherwise stated.
We first consider the following definitions.
\begin{defn}
Let$\left\{x_n\right\}$ be a sequence in $X$ and $x\in X$. If for any $\varepsilon >0$ the set $A(\varepsilon)=\left\{n\in N : \norm{d(x_n,x)} \geq \varepsilon\right\}\in I$, then $\left\{x_n\right\}$ is said to be $I$-convergent with respect to $\mathbb{A}$ and we write $I$-$\lim x_n = x$.
\end{defn}
\begin{defn}
Let $\left\{x_n\right\}$ be a sequence in $X$ and $x\in X$. If there exists a $M=\left\{m_1<m_2<\cdots<m_k<\cdots\right\}$ in $F(I)$ (i.e. $\mathbb{N}\setminus M\in I$) such that for any $\varepsilon>0$ there exists a positive integer $n_0$ satisfying $\norm{d(x_{m_k}, x)}<\varepsilon$ for all $k \geq n_{0}$, then $\left\{x_n\right\}$ is said to be $I^*$-convergent to $x$ with respect to $\mathbb{A}$ and we write $I^*$-$\lim x_n=x$.
\end{defn}
\section{$I$-Cauchy and $I^*$-Cauchy conditions in a $C^*$-algebra valued metric space.}\label{sec 3}
We now introduce the following definitions.
\begin{defn}
 A sequence $\left\{x_n\right\}$ in $X$ is called $I$-Cauchy sequence with respect to $\mathbb{A}$ if for each $\varepsilon>0$ there is a positive integer $m$ such that the set $A(\varepsilon)=\left\{ n\in \mathbb{N} : \norm{d(x_{n_{0}}, x_n} \geq \varepsilon\right\} \in I$.
\end{defn}
\begin{defn}
A sequence $x=\left\{x_n\right\}$ in $X$ is called an $I^*$-Cauchy sequence with respect to $\mathbb{A}$, if there exists a set $M=\left\{m_1<m_2<\cdots<m_k<\cdots\right\}\in F(I)$ such that the sub sequence $x_k=\left\{x_{m_k}\right\}$ is a Cauchy sequence in $(X, \mathbb{A}, d)$. i.e. for any $\varepsilon >0$ there is a $m$ such that $\norm{d(x_{m_k}, x_{m_p})} < \varepsilon $ for all $k, p >m$
\end{defn}
\begin{exmp}\label{ex1}
Let $X=\mathbb{R}, \:\mathbb{A}=M_2(\mathbb{R})$, the space of all $2\times 2$ real matrices. Define $d(x, y)= \text{diag} (|x -y |, \alpha| x- y|), \alpha \geq 0$ is a constant. Now $\mathbb{A}$ can be identified with $B(\mathbb{R}^2)$, the set of all bounded linear maps from the Hilbert space $\mathbb{R}^2$ to $\mathbb{R}^2$, which is obviously a unital Banach-algebra with pointwise-defined operations for addition: $(T_1 + T_2)(x)= T_1(x) +T_2(x)$ and the scalar multiplication : $(\lambda T)(x)=\lambda T(x)$, the multiplication of operators is given by $(T_1, T_2)\mapsto T_1 \circ T_2$, and the norm is given by the operator norm $\norm{T}=\displaystyle{\sup_{\norm{x}=1, x\in \mathbb{R}^2}} \norm{T(x)}$. $\mathbb{A}$ becomes a $C^*$-algebra by taking Hilbert adjoint operator as the the involution map $T \mapsto T^*$. Now we see that $d(x, y)= T_{xy}$ where $T_{xy}(x', y')=( x'|x -y|, \alpha y'|x -y|)$. Now consider the sequence $x_n=\frac{1}{n}$. Let $\varepsilon >0$ be arbitrary. Then we can choose a $n_0\in \mathbb{N}$ such that $\frac{1}{n_0} < \varepsilon$. Now, $d(x_n, x_{n_{0}})= T_{x_nx_{n_{0}}}$, where $T_{x_n x_{n_0}}(x, y) = (x|x_n - x_{n_0}|, \alpha y|x_n - x_{n_0}|)= (\frac{|n_0 - n|}{n_0n}x, \alpha(\frac{|n_0 -n|}{n_0n})y)$. So, $\norm{d(x_n, x_{n_0})}=\norm
{T_{x_n x_{n_0}}}=\displaystyle{\sup_{\norm{x}=1, x\in \mathbb{R}^2}}\norm{T_{x_n x_{n_0}}(x)}= \begin{cases}\alpha\frac{|n_0- n|}{n_0n}\: \text{if} \:\alpha >1  &\\ \frac{|n_0 -n |}{n_0n}\: \text{if}\: 0\leq\alpha < 1 \end{cases}$. Now $A(\varepsilon)=\left\{n\in \mathbb{N} : \frac{|n_0 - n |}{n_0n}\geq \varepsilon\right\}$ or $\left\{n\in \mathbb{N} : \alpha(\frac{|n_0 -n |}{n_0n})\right\}$ both of the set is finite set. Hence belongs to $I$, as $I$ is an admissible ideal. This shows that $x_n=\frac{1}{n}$ is $I$-Cauchy sequence with respect to $\mathbb{A}$ in $X$.
\end{exmp}
\begin{note}
It may happens that there exists an $C^*$-algebra $\mathbb{A}$ with respect to which a sequence is $I$-Cauchy but there may exists another $C^*$-algebra $\mathbb{B}$ with respect to which the same sequence may not be $I$-Cauchy. The following example is such one.
\end{note}
\begin{exmp}
We have seen in the example \ref{ex1} that the sequence $\left\{\frac{1}{n}\right\}$ is $I$-Cauchy with respect to $\mathbb{A}$ in $(X, \mathbb{A}, d)$. Now let us consider the collection $\mathbb{B}=\ell^\infty(S)$, the set of all bounded complex-valued functions on $S$, where $S=[a,b]$. Then $\mathbb{B}$ is unital Banach algebra where the operations are defined pointwise: $(f + g)(x)= f(x) + g(x)$, $(fg)(x)= f(x)g(x)$, $(\lambda f)(x)=\lambda f(x)$ and the norm is the sup-norm  ${\norm f}_{\infty}= \displaystyle{\sup_{x\in S}}|f(x)|$. With the involution $f \mapsto f^*$ defined by $f^*=\bar{f}$, where `$\bar{f}$' denotes the complex conjugation $\lambda \mapsto \bar{\lambda}$ of the function $f$, \:$\mathbb{B}$ becomes a $C^*$-algebra. Let $d(x,y)=\begin{cases} \frac{1}{|x - y|}f \: \text{if}\: x\neq y &\\ 0 \: \text{if}\: x=y \end{cases}$, where $f\in \mathbb{B}$ such that $\norm{f} >1$ and $f$ is kept fixed. Let $\varepsilon>0$ be given. Then we can choose a $n_0\in \mathbb{N}$ such that $\frac{1}{n_0}< \varepsilon$. Then $d(x_n, x_{n_0})=\frac{1}{|\frac{1}{n} - \frac{1}{n_0}|}f =|\frac{n_0 n}{n_0 - n}|f$. So $\norm{d(x_n, x_{n_0})}=|\frac{n_0 n}{n_0 - n}|\norm{f}$. Therefore $A(\varepsilon)=\left\{n\in \mathbb{N} : \norm{d(x_n, x_{n_0})}\geq \varepsilon\right\}$ excludes only finite number of $n\in \mathbb{N}$. Now as $I$ is an admissible ideal, so $A(\varepsilon)\notin I$. Therefore $\left\{x_n\right\}$ is not $I$-Cauchy with respect to $\mathbb{B}$.
\end{exmp}
Let $\varepsilon > 0$ and $\left\{x_n\right\}$ be a sequence in $(X,\mathbb{A}, d)$. We denote $E_k(\varepsilon)=\left\{n\in \mathbb{N}: \norm{d(x_n, x_k)}\geq \varepsilon\right\}$, $k\in\mathbb{N}$. Then we have the following proposition.
\begin{prop}\cite{dems}\label{prop1}
For a sequence $\left\{x_n\right\}$ of points in $(X, \mathbb{A}, d)$ the following are equivalent.\\
(1)$\left\{x_n\right\}_{n\in\mathbb{N}}$ is an $I$-Cauchy sequence with respect to $\mathbb{A}$.\\
(2)($\forall \varepsilon>0$) ($\exists D\in I$) ($\forall m,n\not\in D$) $\norm{d(x_m,x_n)}<\varepsilon$.\\
(3)($\forall \varepsilon>0$)   $\left\{k\in\mathbb{N} : E_k(\varepsilon)\not\in I\right\}\in I$
\end{prop}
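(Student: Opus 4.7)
The plan is to establish the cycle $(1) \Rightarrow (2) \Rightarrow (3) \Rightarrow (1)$, so that all three conditions become equivalent. The only genuinely non-routine ingredient is transferring the $C^*$-algebra-valued triangle inequality $d(x,y) \preceq d(x,z) + d(z,y)$ into a scalar inequality for $\|d(\cdot,\cdot)\|$; for this I will invoke Lemma~2.1(iii), which says the cone $\mathbb{A}^+$ is normal with normal constant $1$, together with subadditivity of the $C^*$-norm.

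For $(1) \Rightarrow (2)$, fix $\varepsilon > 0$ and apply Definition~3.1 (the $I$-Cauchy definition) with tolerance $\varepsilon/2$ to produce an index $n_0$ such that
$$D := \{n \in \mathbb{N} : \|d(x_{n_0}, x_n)\| \geq \varepsilon/2\} \in I.$$
For any $m, n \notin D$ one has $0_\mathbb{A} \preceq d(x_m, x_n) \preceq d(x_m, x_{n_0}) + d(x_{n_0}, x_n)$, and Lemma~2.1(iii) combined with the triangle inequality for $\|\cdot\|$ yields $\|d(x_m, x_n)\| \leq \|d(x_m, x_{n_0})\| + \|d(x_{n_0}, x_n)\| < \varepsilon$, which is exactly (2).

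For $(2) \Rightarrow (3)$, given $\varepsilon > 0$ pick $D \in I$ as in (2). For every $k \notin D$ and every $n \notin D$, we have $\|d(x_n, x_k)\| < \varepsilon$; hence $E_k(\varepsilon) \subseteq D$, and since $I$ is hereditary $E_k(\varepsilon) \in I$. Therefore $\{k : E_k(\varepsilon) \notin I\} \subseteq D \in I$, so this set itself lies in $I$. For $(3) \Rightarrow (1)$, fix $\varepsilon > 0$; by (3) the set $B := \{k : E_k(\varepsilon) \notin I\}$ belongs to $I$, and since $I$ is nontrivial admissible we have $\mathbb{N} \setminus B \neq \emptyset$, so we may select some $k_0 \notin B$. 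By definition of $B$ this gives $E_{k_0}(\varepsilon) \in I$, and taking $k_0$ as the required index $n_0$ in Definition~3.1 shows that $\{x_n\}$ is $I$-Cauchy.

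The only step with any real content is $(1) \Rightarrow (2)$, where the move from the abstract order relation on $\mathbb{A}_h$ to a numerical inequality for norms requires Lemma~2.1(iii); the remaining implications are purely ideal-theoretic manipulations using hereditariness of $I$, non-triviality (so $\mathbb{N} \notin I$), and the obvious inclusions of the exceptional sets.
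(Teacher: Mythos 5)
Your proposal is correct and follows essentially the same route as the paper: triangle inequality plus normality of the cone for $(1)\Rightarrow(2)$, the inclusion $\left\{k : E_k(\varepsilon)\notin I\right\}\subseteq D$ for $(2)\Rightarrow(3)$, and non-triviality of $I$ to extract a single admissible index for $(3)\Rightarrow(1)$. The only cosmetic differences are that you use $\varepsilon/2$ where the paper uses $\varepsilon/3$, and you prove the inclusion in $(2)\Rightarrow(3)$ directly (via $E_k(\varepsilon)\subseteq D$ for $k\notin D$ and hereditariness) rather than by the paper's argument by contradiction.
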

\begin{proof}
(1)$\Rightarrow$(2) Let $\varepsilon>0$. Then for $\frac{\varepsilon}{3}$ there exists a $k\in \mathbb{N}$ such that $D=\left\{n\in \mathbb{N} : \norm{d(x_n, x_k)}\geq \frac{\varepsilon} {3}\right\}\in I$. Now for any $n, m\notin D$ we have $\norm{d(x_n, x_k)}< \frac{\varepsilon}{3}$ and $\norm{d(x_m,x_k)}<\frac{\varepsilon}{3}$. Hence $\norm{d(x_n, x_m)}\leq \norm{d(x_n,x_k)} +\norm{d(x_k,x_m)} < \frac{2\varepsilon}{3} <\varepsilon$.\\
(2)$\Rightarrow$(3) Let $\varepsilon >0$. Then as in ($2$) there exists $D=E_{k}(\frac{\varepsilon}{3})=\left\{n\in \mathbb{N} : \norm{d(x_n, x_k)}\geq \frac{\varepsilon}{3}\right\}\in I$ such that if $m, n\notin D$ then $ \norm{d(x_n, x_m)}< \varepsilon$. We have to prove that $\left\{k\in \mathbb{N} : E_k(\varepsilon)\notin I\right\}\in I$. We show $\left\{k\in \mathbb{N} : E_k(\varepsilon)\notin I\right\}\subset D$. If possible let $p\in \left\{k\in \mathbb{N} : E_k(\varepsilon)\notin I\right\}$ such that $p\notin D$. Then $E_p(\varepsilon)\notin I$. So $E_p(\varepsilon) \not\subset D$. Choose $r \in E_p(\varepsilon)\setminus D$. As $r\in E_p(\varepsilon)$, so $\norm{d(x_r, x_p)}\geq \varepsilon$. But as $r, p\notin D$ by the condition $\norm{d(x_r, x_p)}< \varepsilon$, a contradiction.\\
(3)$\Rightarrow$(1) From (3) we have $\left\{k\in\mathbb{N} : E_k(\varepsilon)\in I\right\}\in F(I)$ \: $\forall\varepsilon> 0$. Therefore $\left\{k\in\mathbb{N} : E_k(\varepsilon)\in I\right\}\neq \phi$ \: for all $\varepsilon >0$. Hence there exists $k\in\mathbb{N}$ such that $E_k(\varepsilon)\in I$. Hence $(1)$ follows.
\end{proof}
\begin{thm}\label{cauchypro} 
Let $I$ be an arbitrary admissible ideal. Then$I\text{-}\displaystyle{\lim_{n\to\infty}}x_n=\xi$ implies that $\left\{x_n\right\}$ is an $I$-Cauchy sequence with respect to $\mathbb{A}$ in $(X,\mathbb{A},d)$.
\end{thm}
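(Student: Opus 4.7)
The plan is to mimic the classical metric-space argument that convergence implies the Cauchy condition, with the triangle inequality now read in $\mathbb{A}^+$ and then transferred to $\mathbb{R}$ via the norm. Assume $I\text{-}\lim_{n\to\infty} x_n = \xi$. Fix $\varepsilon > 0$ and consider the set
\[
A(\varepsilon/2) \;=\; \bigl\{n \in \mathbb{N} : \norm{d(x_n,\xi)} \geq \varepsilon/2\bigr\}.
\]
By hypothesis, $A(\varepsilon/2) \in I$. Since $I$ is admissible and nontrivial, $\mathbb{N} \notin I$, so $\mathbb{N} \setminus A(\varepsilon/2)$ is nonempty; pick any $n_0$ in it, so that $\norm{d(x_{n_0},\xi)} < \varepsilon/2$.

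Now the goal is to show that $E_{n_0}(\varepsilon) = \{n \in \mathbb{N} : \norm{d(x_n,x_{n_0})} \geq \varepsilon\}$ lies in $I$. The triangle inequality in the $C^*$-algebra valued metric gives $d(x_n,x_{n_0}) \preceq d(x_n,\xi) + d(x_{n_0},\xi)$, and both sides are in $\mathbb{A}^+$. Applying the normality property (iii) of the cone $\mathbb{A}^+$ from the Lemma quoted after Definition of $C^*$-algebra, together with the ordinary norm triangle inequality on the right, yields
\[
\norm{d(x_n,x_{n_0})} \;\leq\; \norm{d(x_n,\xi)} + \norm{d(x_{n_0},\xi)}.
\]
Consequently, whenever $n \in E_{n_0}(\varepsilon)$, we have $\norm{d(x_n,\xi)} \geq \varepsilon - \norm{d(x_{n_0},\xi)} > \varepsilon/2$, so $n \in A(\varepsilon/2)$. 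Thus $E_{n_0}(\varepsilon) \subseteq A(\varepsilon/2)$, and hereditariness of the ideal $I$ forces $E_{n_0}(\varepsilon) \in I$. This is exactly the defining condition of an $I$-Cauchy sequence with respect to $\mathbb{A}$, completing the argument.

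There is no real obstacle here; the only mildly delicate point is to remember that in order to pass from $d(x_n,x_{n_0}) \preceq d(x_n,\xi) + d(x_{n_0},\xi)$ (an order inequality in $\mathbb{A}_h$) to a scalar inequality between norms one must invoke the normality of the cone $\mathbb{A}^+$ (normal constant $1$), which is precisely part (iii) of Lemma~2.1 in the preliminaries. Once that step is granted, the rest is the standard $\varepsilon/2$ splitting together with the ideal axioms $(i)$–$(ii)$ applied to subset containment.
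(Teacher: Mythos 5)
Your proposal is correct and follows essentially the same route as the paper's own proof: pick $n_0$ outside the exceptional set of the $I$-limit, use the $C^*$-valued triangle inequality together with normality of the cone $\mathbb{A}^+$ to pass to a scalar norm inequality, and conclude by subset containment and hereditariness of $I$; the only difference is cosmetic bookkeeping of the tolerances ($\varepsilon/2$ and $\varepsilon$ versus $\varepsilon$ and $2\varepsilon$). If anything, your justification for the existence of $n_0$ (via nontriviality, $\mathbb{N}\notin I$) is slightly more precise than the paper's appeal to admissibility alone.
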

\begin{proof}
 Let $I\text{-}\displaystyle{\lim_{n\to\infty}}x_n=\xi$. Then for each $\varepsilon >0$, we have$A(\varepsilon)=\left\{n\in\mathbb{N}:\norm{d(x_n,\xi)}\geq\varepsilon\right\}\in I$. Since $I$ is an admissible ideal there exists an $n_0\in \mathbb{N}$ such that $n_0\notin A(\varepsilon)$. Let $B(\varepsilon)=\left\{n\in\mathbb{N} : \norm{d(x_n, x_{n_0})}\geq 2\varepsilon\right\}$. Now $\norm{d(x_n,x_{n_o})} \leq \norm{d(x_n, \xi)} + \norm{d(x_{n_0},\xi)}$ ( since $0_\mathbb{A}\preceq a\preceq b \Rightarrow \norm a \leq \norm b$ ). So, if $n\in B(\varepsilon)$ then we get $2\varepsilon\leq \norm{d(x_n,x_{n_0})}\leq \norm{d(x_n, \xi)} +\norm{d(x_{n_0}, \xi)}$. Therefore $2\varepsilon\leq \norm{d(x_n,x_{n_0})} < \varepsilon + \norm{d(x_n, \xi)}$ ( since $n_0\notin A(\varepsilon)$). So $\norm{d(x_n, \xi)}>\varepsilon $. This shows that $n\in A(\varepsilon)$. Thus we see that $B(\varepsilon) \subset A(\varepsilon)$. Also $A(\varepsilon)\in I$ for  each $\varepsilon >0$. This gives $B(\varepsilon)\in I$  i.e. $\left\{x_n\right\}$ is $I$-Cauchy sequence. Hence the proof is complete.   
\end{proof}
\begin{thm}\label{3.3}
Let $I$ be an admissible ideal. If $x=\left\{x_n\right\}$ is $I^*$-Cauchy sequence with respect to $\mathbb{A}$ in $X$  then it is also $I$-Cauchy sequence with respect to $\mathbb{A}$.
\end{thm}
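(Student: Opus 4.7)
The plan is to unpack the definition of $I^*$-Cauchy and exploit the fact that a set of the form $\mathbb{N}\setminus\{m_k : k > m\}$ differs from $\mathbb{N}\setminus M$ by only finitely many points, and is therefore in $I$ (using admissibility of $I$).

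By hypothesis, there exists $M = \{m_1 < m_2 < \cdots\} \in F(I)$ such that the subsequence $\{x_{m_k}\}$ is an ordinary Cauchy sequence in $(X,\mathbb{A},d)$. Fix $\varepsilon > 0$. First I would apply the Cauchy condition for the subsequence at level $\varepsilon$ to produce an index $m_0 \in \mathbb{N}$ such that $\|d(x_{m_k}, x_{m_p})\| < \varepsilon$ for all $k,p > m_0$. Then I would single out one such element of the subsequence, say $n_0 := m_{m_0+1}$, which will serve as the ``base point'' in the definition of $I$-Cauchy.

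Next I would analyze the set $B(\varepsilon) = \{n \in \mathbb{N} : \|d(x_{n_0}, x_n)\| \geq \varepsilon\}$. For any $n$ of the form $n = m_k$ with $k > m_0$, the Cauchy estimate gives $\|d(x_{n_0}, x_{m_k})\| < \varepsilon$, so such an $n$ lies outside $B(\varepsilon)$. Consequently $B(\varepsilon) \subset \mathbb{N}\setminus \{m_k : k > m_0\} = (\mathbb{N}\setminus M) \cup \{m_1, m_2, \ldots, m_{m_0}\}$. Now $\mathbb{N}\setminus M \in I$ because $M \in F(I)$, and the finite set $\{m_1, \ldots, m_{m_0}\}$ belongs to $I$ because $I$ is admissible (singletons are in $I$, so any finite union of singletons is too). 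By the ideal axioms the union lies in $I$, and hence $B(\varepsilon) \in I$.

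Finally, since $\varepsilon > 0$ was arbitrary, this exhibits, for each $\varepsilon$, an index $n_0$ making $B(\varepsilon) \in I$, which is exactly the defining condition for $\{x_n\}$ to be $I$-Cauchy with respect to $\mathbb{A}$. There is no serious obstacle here; the only point that requires a bit of care is keeping track of the two sources of ``bad'' indices (the complement of $M$, which is in $I$ by the filter condition, and the initial finitely many elements of $M$, which are in $I$ by admissibility) and combining them cleanly via the ideal property.
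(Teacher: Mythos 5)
Your proposal is correct and follows essentially the same route as the paper's own proof: both choose the base point $n_0=m_{k_0+1}$ from the tail of the Cauchy subsequence and bound the bad set by $(\mathbb{N}\setminus M)\cup\{m_1,\dots,m_{k_0}\}$, which lies in $I$ by the filter property and admissibility. No gaps.
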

\begin{proof}
Let $x=\left\{x_n\right\}$ be an $I^*$-Cauchy sequence in $X$ with respect to $\mathbb{A}$. Then by definition there exists a set $M=\left\{m_1<m_2<\cdots<m_k<\cdots\right\}\in F(I)$ such that for any $\varepsilon >0$ there exists $k_0=k_0(\varepsilon)$ such that $\norm{d(x_{m_k}, x_{m_p})}<\varepsilon$ for all $k, p > k_{0}$. Let $n_0=m_{k_{0} +1}$. Then for every $\varepsilon >0$ we have $\norm{d(x_{m_k}, x_{n_0})}<\varepsilon$ for all $k > k_0$. Now the set $H=\mathbb{N}\setminus M \in I$ and\begin{equation*}
A(\varepsilon)=\left\{n\in\mathbb{N} : \norm{d(x_n, x_{n_0})}\geq \varepsilon\right\}\subset H \cup\left\{m_1<m_2<\cdots<m_{k_0}\right\}.\end{equation*} The set $H \cup\left\{m_1<m_2<\cdots<m_{k_0}\right\}\in I$. Therefore, $A(\varepsilon)\in I$ and hence $\left\{x_n\right\}$ is $I$-Cauchy with respect to $\mathbb{A}$.
\end{proof}
\begin{note}
$I^*$-convergent sequence in $X$ with respect to $\mathbb{A}$ always implies that it is also $I$-Cauchy with respect to $\mathbb{A}$. For, let $\left\{x_n\right\}$ be a sequence in $X$ which is $I^*$-convergent with respect to $\mathbb{A}$ to $x\in X$. Then there exists $M=\left\{m_1<m_2<\cdots<m_k<\cdots\right\}\in F(I)$ ( i.e. $\mathbb{N}\setminus M \in I$ ) such that for any $\varepsilon >0$, $\norm{d(x_{m_k}, x)}<\varepsilon/2$ for all $k>N=N(\varepsilon/2)$. Now $\norm{d(x_{m_k}, x_{m_p})}\leq \norm{d(x_{m_k},x)} + \norm{d(x_{m_p}, x)} \leq \norm{d(x_{m_k}, x)} + \norm{d(x_{m_p},x)} <\varepsilon/2 +\varepsilon/2=\varepsilon $ for all $k, p>N$. So $\left\{x_n\right\}$ is $I^*$-Cauchy in $(X,\mathbb{A}, d)$. Now as $I^*$-Cauchy sequence is $I$-Cauchy sequence with respect to $\mathbb{A}$, so a $I^*$-convergent sequence always implies that it is also $I$-Cauchy with respect to $\mathbb{A}$.
\end{note}
In general $I$-Cauchy condition with respect to $\mathbb{A}$ does not imply $I^*$-Cauchy condition with respect to $\mathbb{A}$, as shown in the following example.
\begin{exmp}
Let $X=\mathbb{R}$ and $\mathbb{A}=\ell^\infty(S)$, the set of all bounded complex-valued functions on $S$, where $S=[a,b]$. Then $\mathbb{A}$ is unital Banach algebra where the operations are defined pointwise: $(f + g)(x)= f(x) + g(x)$, $(fg)(x)= f(x)g(x)$, $(\lambda f)(x)=\lambda f(x)$ and the norm is given by  the sup-norm i.e, ${\norm f}_{\infty}= \displaystyle{\sup_{x\in S}}|f(x)|$. With the involution $f \mapsto f^*$ defined by $f^*=\bar{f}$, where `$\bar{f}$' denotes the complex conjugation $\lambda \mapsto \bar{\lambda}$ of the function $f$, \:$\mathbb{A}$ becomes a $C^*$-algebra. Now let $\mathbb{N}=\displaystyle{\bigcup_{j\in\mathbb{N}}}\Delta_j$ be a decomposition of $\mathbb{N}$ such that each $\Delta_j=\left\{2^{j -1}( 2s -1) : s=1, 2, 3,\cdots\right\}$. Then $\Delta_i \cap\Delta_j=\phi$ for $i\neq j$. Let $I$ be the class of all those subsets of $\mathbb{N}$ that can intersects with only a finite numbers of $\Delta_i$'s. Then clearly $I$ is a non-trivial admissible ideal of $\mathbb{N}$. Let $x_n=\frac{1}{j}$ if $n\in \Delta_j$. Let $d(x,y)=|x - y|f$, where $f\in C\mathbb{A}$ with $\norm{f} > 1$ and $f$ is kept fixed. Let $\varepsilon >0$ be given. Now there is a $N=N(\varepsilon)\in\mathbb{N}$ such that $\frac{1}{N}< \frac{\varepsilon}{2 s}$, where $s=\displaystyle{\sup_{x\in \mathbb{R}}}|f(x)|$. Then $\norm{d(\frac{1}{n},\frac{1}{m})}=|\frac{1}{n} - \frac{1}{m}|s \leq (\frac{1}{n}+ \frac{1}{m})s < s(\frac{\varepsilon}{2s} + \frac{\varepsilon}{2s})=\varepsilon$ for all $n, m \geq N$. Now $B=\Delta_1\cup\Delta_2\cup\cdots\cup\Delta_N\in I$ and clearly $m, n\notin B$ implies $\norm{d(x_m,x_n)}<\varepsilon$. Hence $\left\{x_n\right\}$ $I$-Cauchy in $(X,\mathbb{A}, d)$ by the preposition \ref{prop1}.
Next we shall show that $\left\{x_n\right\}$ is not $I^*$-Cauchy with respect to $\mathbb{A}$. If possible assume that $\left\{x_n\right\}$ is $I^*$-Cauchy. Then there is a $M\in F(I)$ such that $\left\{x_n\right\}_{n\in M}$ is Cauchy sequence with respect to $\mathbb{A}$. Since $\mathbb{N}\setminus M\in I$, so there exists a $l\in \mathbb{N}$ such that $\mathbb{N}\setminus M\subset \Delta_1\cup\Delta_2\cup\cdots\Delta_l$ but then $\Delta_i\subset M$ for all $i>l$. In particular $\Delta_{l +1}, \Delta_{l+2} \subset M$. Now from the construction of $\Delta_i$'s it follows that given any $k\in\mathbb{N}$ there are $m\in\Delta_{l+1}$ and $n\in \Delta_{l+2}$ such that $m,n \geq k$. Hence $\norm{d(x_m, x_n)}=\frac{1}{(l+1)(l+2)}s$ where $s=\norm f$. Choose $\varepsilon_0=\frac{s}{3(l+1)(l+2)}$. Therefore there is no $k\in \mathbb{N}$ such that whenever $m, n \in \mathbb{N}$ with $m, n\geq k$ then $\norm{d(x_n, x_m)}<\varepsilon_0$ holds. This contradicts the fact that $\left\{x_n\right\}_{n\in M}$ is Cauchy with respect to $\mathbb{A}$. 
\end{exmp}
However the converse of Theorem \ref{3.3} holds if the condition (AP) hold for an admissible ideal $I$.
To prove that we need the following lemma. 

\begin{lem}\cite{dems}\label{lem} Let $\left\{P_i\right\}_{i=1}^{\infty}$ be a countable collection of subsets of $\mathbb{N}$ such that $P_i\in F(I)$ for each $i$, where $F(I)$ is a filter associated with an admissible ideal $I$ with the property (AP). Then there exists a set $P\subset \mathbb{N}$ such that $P\in F(I)$ and the set $P\setminus P_i$ is finite for all $i$.
\end{lem}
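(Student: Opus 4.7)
\begin{pf}[Proof plan for Lemma \ref{lem}]
The plan is to reduce to the hypothesis of property (AP) by passing from the filter sets $P_i$ to their complements and then disjointifying. First, set $A_i = \mathbb{N}\setminus P_i$, so that each $A_i\in I$ since $P_i\in F(I)$. These sets are not in general disjoint, so I would replace them by
\[
A'_1 = A_1,\qquad A'_i = A_i\setminus\bigl(A_1\cup A_2\cup\cdots\cup A_{i-1}\bigr)\quad(i\geq 2).
\]
Then $\{A'_i\}$ is a countable family of pairwise disjoint sets, and $A'_i\in I$ because $A'_i\subseteq A_i\in I$ and $I$ is closed under taking subsets. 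A key bookkeeping observation, which I will use later, is that the partial unions agree:
\[
\bigcup_{j\leq i} A_j \;=\; \bigcup_{j\leq i} A'_j\qquad\text{for every }i.
\]

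Next I apply the (AP) property to the disjoint family $\{A'_i\}_{i=1}^{\infty}\subset I$ to obtain sets $\{B_i\}_{i=1}^{\infty}$ such that $A'_i\Delta B_i$ is finite for every $i$ and $B:=\bigcup_{i=1}^{\infty} B_i\in I$. Now define the candidate set $P := \mathbb{N}\setminus B$; clearly $P\in F(I)$.

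It remains to show that $P\setminus P_i$ is finite for each $i$. Unwinding the definitions, $P\setminus P_i = (\mathbb{N}\setminus B)\setminus(\mathbb{N}\setminus A_i) = A_i\setminus B$. Using the identity of partial unions noted above, $A_i\subseteq\bigcup_{j\leq i} A'_j$, so
\[
A_i\setminus B \;\subseteq\; \bigcup_{j\leq i}\bigl(A'_j\setminus B\bigr) \;\subseteq\; \bigcup_{j\leq i}\bigl(A'_j\setminus B_j\bigr) \;\subseteq\; \bigcup_{j\leq i}\bigl(A'_j\Delta B_j\bigr),
\]
since $B_j\subseteq B$. The right-hand side is a finite union of finite sets, hence finite. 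This gives $P\setminus P_i$ finite for every $i$, completing the argument.

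The only nontrivial step is the disjointification trick together with the observation that $\bigcup_{j\le i}A_j=\bigcup_{j\le i}A'_j$; without this equality one cannot control $A_i\setminus B$ using only the $A'_j\Delta B_j$. Everything else is a direct unpacking of the definition of (AP) and of the filter $F(I)$.
\end{pf}
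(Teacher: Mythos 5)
Your proof is correct; the paper itself states this lemma with only a citation to Dems and gives no proof, and your argument (disjointify the complements $A_i=\mathbb{N}\setminus P_i$, apply (AP) to the disjointified family, and control $A_i\setminus B$ via $\bigcup_{j\le i}A'_j=\bigcup_{j\le i}A_j$ and the finiteness of each $A'_j\Delta B_j$) is exactly the standard proof from the cited literature. All the steps check out, including the identification $P\setminus P_i=A_i\setminus B$ and the inclusion $A'_j\setminus B\subseteq A'_j\setminus B_j\subseteq A'_j\Delta B_j$.
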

\begin{thm}\label{last}
If $I$ be an admissible ideal with the property (AP) and if $\left\{x_n\right\}$ is a $I$-Cauchy sequence in $X$ with  respect to a $C^*$-algebra $\mathbb{A}$ then it is $I^*$-Cauchy with respect to a $C^*$-algebra $\mathbb{A}$
\end{thm}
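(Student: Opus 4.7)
The plan is to mimic the standard argument for $I$-Cauchy to $I^*$-Cauchy in ordinary metric spaces, using Proposition \ref{prop1} to reformulate $I$-Cauchy in the ``$\forall m,n\notin D$'' form and then invoking Lemma \ref{lem} to patch together countably many filter sets into one.

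First, I would apply Proposition \ref{prop1}(2) to the given $I$-Cauchy sequence with $\varepsilon = 1/j$ for each $j \in \mathbb{N}$. This yields a set $D_j \in I$ such that $\norm{d(x_m, x_n)} < 1/j$ whenever $m,n \notin D_j$. Let $P_j = \mathbb{N}\setminus D_j$; then $P_j \in F(I)$ for every $j$.

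Next, since $I$ has the property (AP), Lemma \ref{lem} applied to the countable collection $\{P_j\}_{j=1}^\infty$ produces a set $P \in F(I)$ such that $P\setminus P_j$ is finite for each $j$. Enumerate $P = \{m_1 < m_2 < \cdots < m_k < \cdots\}$; this is the candidate set $M$ required by the definition of $I^*$-Cauchy.

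It remains to verify that $\{x_{m_k}\}$ is an ordinary Cauchy sequence in $(X,\mathbb{A},d)$. Given any $\varepsilon > 0$, pick $j$ with $1/j < \varepsilon$. Because $P\setminus P_j$ is finite, there exists a threshold $k_0$ such that $m_k \in P_j$ for every $k > k_0$. Then for all $k, p > k_0$, both $m_k$ and $m_p$ lie outside $D_j$, so by the choice of $D_j$ we get $\norm{d(x_{m_k}, x_{m_p})} < 1/j < \varepsilon$. This gives the required Cauchy condition for the subsequence indexed by $M$, and hence $\{x_n\}$ is $I^*$-Cauchy with respect to $\mathbb{A}$.

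I do not expect a genuine obstacle here: the $C^*$-algebra valued nature of $d$ enters only through the real number $\norm{d(\cdot,\cdot)}$, so the argument proceeds exactly as in the classical real-valued metric setting. The only point requiring mild care is to use the convenient equivalent form of $I$-Cauchy from Proposition \ref{prop1}(2) rather than the original definition, so that the ``for all $m,n\notin D$'' quantification lines up cleanly with the finiteness of $P\setminus P_j$ supplied by Lemma \ref{lem}.
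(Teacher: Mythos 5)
Your proposal is correct and follows essentially the same route as the paper: produce a filter set for each tolerance $1/j$, merge them into a single $P\in F(I)$ via Lemma \ref{lem} using property (AP), and use the finiteness of $P$ minus each filter set to verify the subsequence indexed by $P$ is Cauchy. The only (cosmetic) difference is that you route through Proposition \ref{prop1}(2), so the triangle inequality is absorbed there, whereas the paper works directly with the sets $B_k=\left\{n:\norm{d(x_n,x_{m_k})}<1/k\right\}$ centered at points $x_{m_k}$ and applies the triangle inequality at the final step with $j>2/\varepsilon$.
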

\begin{proof}
Let $x=\left\{x_n\right\}$ in $X$ be an $I$-Cauchy sequence with respect to $\mathbb{A}$ and let $\varepsilon > 0$ be arbitrary. Then by definition, there exists a positive integer $n_0$ such that $A(\varepsilon)=\left\{n\in\mathbb{N}        : \norm{d(x_n,x_{n_0})}\geq \varepsilon\right\}\in I$. So in particular for each $k\in \mathbb{N}$, there exists $m_k$ such that $A(\frac{1}{k})=\left\{n\in \mathbb{N} :\norm{d(x_n, x_{m_k})}\geq \frac{1}{k}\right\}\in I$. Let $B_k=\left\{n\in\mathbb{N} :\norm{d(x_n,x_{m_k})}<\frac{1}{k}\right\}$, where $k=1,2,\cdots$. It is clear that $B_k\in F(I)$ for $k=1,2,\cdots $. Since $I$ has the property (AP), then by lemma (\ref{lem}) there exists a set $P\subset \mathbb{N}$ such that $P\in F(I)$, and $P\setminus B_k$ is finite for all $k$. Now let $j\in \mathbb{N}$ be such that $j>\frac{2}{\varepsilon}$. As $P\setminus B_j$ is a finite set there exists $k$ such that $m, n\in P $ implies $m\in B_j$ and $n\in B_j$ for all $m,n> k$. Therefore, $\norm{d(x_n,x_{m_j})}<\frac{1}{j}$ and $\norm{d(x_m,x_{m_j})}<\frac{1}{j}$ for all $m,n >k$. Hence it follows that $\norm{d(x_n,x_m)}\leq \norm{d(x_n,x_{m_j})} + \norm{d(x_m,x_{m_j}) }<\frac{2}{j}<\varepsilon$ for $m,n>k$. Thus, there exists $P\in F(I)$ such that for any $\varepsilon>0$ there exists $k$ satisfying $m,n>k$, $m,n\in P\in F(I)\Rightarrow \norm{d(x_n,x_m)}<\varepsilon$. This shows that the sequence $\left\{x_n\right\}$ in $X$ is an $I^*$-Cauchy sequence with respect to $\mathbb{A}$.
\end{proof}
\section{$C^*$ algebra valued normed space}
We will now introduce the concepts of $C^*$-algebra valued normed space.
\begin{defn}
Let $X$ be a real vector space. Suppose that the mapping $\norm{.}_{\mathbb{A}} : X \mapsto \mathbb{A}$ be such that\\
(i) $\norm{x}_{\mathbb{A}}\geq 0_{\mathbb{A}}$ for all $x\in X$ and $\norm{x}_{\mathbb{A}}=0_{\mathbb{A}}$ if and only if $x=\theta_X$, where $\theta_X$ is the zero element of $X$.\\
(ii) $\norm{\alpha x}_{\mathbb{A}} = |\alpha|\norm{x}_{\mathbb{A}}$, $x\in X$, $\alpha\in \mathbb{R}$.\\
(iii) $\norm{ x+ y}_{\mathbb{A}}\leq \norm{x}_{\mathbb{A}} +\norm{y}_{\mathbb{A}}$ for all $x, y\in X$.\\
Then $\norm{.}_{\mathbb{A}}$ is called a $C^*$-algebra valued norm on $X$ and the pair $(\norm{.}_{\mathbb{A}}, X)$ is called $C^*$-algebra valued normed space.
\end{defn}
It is easy to show that every normed space is a $C^*$-algebra valued normed space by putting $\mathbb{A}=\mathbb{R}$ with the involution map $a\mapsto a^*$ taken to be $a^*= a$ for all $a\in \mathbb{A}$.
\begin{exmp}
Let $X=\mathbb{R}$, $\mathbb{A}=M_2(\mathbb{R})$, the space of all $2\times 2$ real matrices. Now $\mathbb{A}$ can be identified with $B(\mathbb{R}^2)$, the set of all bounded linear maps from the Hilbert space $\mathbb{R}^2$ to $\mathbb{R}^2$, which is obviously a unital Banach-algebra with pointwise-defined operations for addition : $(T_1 + T_2)(x)= T_1(x) +T_2(x)$ and the scalar multiplication : $(\lambda T)(x)=\lambda T(x)$, the multiplication of operators is given by $(T_1, T_2)\mapsto T_1 \circ T_2$, and the norm is given by the operator norm $\norm{T}=\displaystyle{\sup_{\norm{x}=1, x\in \mathbb{R}^2}} \norm{T(x)}$. $\mathbb{A}$ becomes a $C^*$-algebra by taking the involution map $T \mapsto T^*$ to be the Hilbert adjoint operator. Let $(X, \norm{.})$ be a normed space with usual norm on $\mathbb{R}$. Let us define a map $\norm{.}_{\mathbb{A}}: X \mapsto \mathbb{A}$ by $\norm{x}_{\mathbb{A}}=\norm{x} \begin{pmatrix}
a &0\\
0 &b
\end{pmatrix}$, where $a, b$ is being kept fixed. Then $(X, \norm{.}_{\mathbb{A}})$ is a $C^*$-algebra valued normed space.
\end{exmp}
\begin{rem}
Let $(X,\norm{.}_{\mathbb{A}})$ be a $C^*$-algebra valued normed space. Set $D(x, y)=\norm{x -y}_{\mathbb{A}}$. Then clearly $(X,D)$ becomes a $C^*$-algebra valued metric space. $D$ is then called ``the $C^*$-algebra valued metric induced by the $C^*$-algebra valued norm $\norm{.}_{\mathbb{A}}$"
\end{rem}
%\begin{proof}
%For all $x, y, z\in X$ \\
%(i) $D(x, y)=0_{\mathbb{A}}$ if and only if $\norm{x -y}_{\mathbb{A}}=0_{\mathbb{A}} \iff x -y =\theta_X \iff x=y$.\\
%(ii) $D(x,y)=\norm{x - y}_{\mathbb{A}}=\norm{-(y - x)}_{\mathbb{A}}=\norm{ y- x}_{\mathbb{A}}=D(y, x)$.\\
%(iii) $D(x, y)=\norm{x - y}_{\mathbb{A}}=\norm{x -z + z - y}_{\mathbb{A}}\leq \norm{x -z}_{\mathbb{A}} +\norm{z - y}_{\mathbb{A}}=D(x, z) +d(z, y)$.

%\end{proof}
\begin{thm}\label{homo}
The $C^*$-algebra valued metric $D$ induced by a $C^*$-algebra valued norm satisfies:\\
(i) $D(x +a , y+ a)=D(x ,y)$\\
(ii)$D(\alpha x, \alpha y)=|\alpha| D(x, y)$, where $x, y, a\in X$ and $\alpha \in \mathbb{R}$.
\end{thm}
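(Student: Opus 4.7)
The proof plan is a direct unwinding of the definition $D(x,y)=\norm{x-y}_{\mathbb{A}}$ from the preceding remark, combined with the two relevant axioms of a $C^*$-algebra valued norm. No deep machinery is involved; the entire content is algebraic cancellation inside the norm and the homogeneity axiom (ii) from the definition of $\norm{\cdot}_{\mathbb{A}}$.

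For part (i), I would simply compute $D(x+a,y+a)=\norm{(x+a)-(y+a)}_{\mathbb{A}}$ and observe that in the underlying real vector space $X$ the element $(x+a)-(y+a)$ equals $x-y$, whence the expression collapses to $\norm{x-y}_{\mathbb{A}}=D(x,y)$. This only uses the vector space structure of $X$ and the well-definedness of $\norm{\cdot}_{\mathbb{A}}$, so no norm axiom is actually invoked beyond the domain being a real vector space.

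For part (ii), I would start from $D(\alpha x,\alpha y)=\norm{\alpha x-\alpha y}_{\mathbb{A}}$, factor the scalar to get $\norm{\alpha(x-y)}_{\mathbb{A}}$ (again using the vector space axioms of $X$), and then apply axiom (ii) of the $C^*$-algebra valued norm, namely $\norm{\alpha z}_{\mathbb{A}}=|\alpha|\norm{z}_{\mathbb{A}}$, to obtain $|\alpha|\norm{x-y}_{\mathbb{A}}=|\alpha|D(x,y)$. Here $|\alpha|$ is the ordinary absolute value of the real scalar $\alpha$, consistent with the convention that $\norm{\cdot}_{\mathbb{A}}$ is defined on a real vector space.

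There is no genuine obstacle to anticipate: the statement is a translation invariance and absolute homogeneity property that is literally built into the definition of the induced metric. The only minor point worth being careful about is that the scalar $\alpha$ lies in $\mathbb{R}$ (as in the definition of $C^*$-algebra valued normed space), so that $|\alpha|\in\mathbb{R}_+\subset\mathbb{A}^+$ acts on $\norm{x-y}_{\mathbb{A}}\in\mathbb{A}^+$ by ordinary scalar multiplication, and the resulting equality is an equality in $\mathbb{A}$, not merely an inequality in the partial order $\preceq$.
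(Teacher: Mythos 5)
Your proposal is correct and matches the paper's proof essentially verbatim: both unwind $D(x,y)=\norm{x-y}_{\mathbb{A}}$, cancel $a$ in part (i), and apply the homogeneity axiom $\norm{\alpha z}_{\mathbb{A}}=|\alpha|\norm{z}_{\mathbb{A}}$ in part (ii). No differences worth noting.
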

\begin{proof}
We have $D(x +a, y+a)=\norm{(x + a) - (y + a)}_{\mathbb{A}}=\norm{x - y}_{\mathbb{A}}=D(x, y)$ and $D(\alpha x, \alpha y)=\norm{\alpha x - \alpha y}_{\mathbb{A}}=|\alpha|\norm{x - y}_{\mathbb{A}}=|\alpha| D(x, y)$.
\end{proof}
The following example is given to show that $C^*$-algebra valued matrices do not necessarily produce $C^*$-algebra valued norm.
\begin{exmp}
Let $X=\mathbb{R}$, $\mathbb{A}=M_2(\mathbb{R})$. We can identify $\mathbb{A}$ with $B(\mathbb{R}^2)$, the set of all bounded linear maps from the Hilbert space $\mathbb{R}^2$ to $\mathbb{R}^2$, which is obviously a unital Banach-algebra with pointwise-defined operations for addition : $(T_1 + T_2)(x)= T_1(x) +T_2(x)$ and the scalar multiplication : $(\lambda T)(x)=\lambda T(x)$, the multiplication of operators is given by $(T_1, T_2)\mapsto T_1 \circ T_2$, and the norm is given by the operator norm $\norm{T}=\displaystyle{\sup_{\norm{x}=1, x\in \mathbb{R}^2}} \norm{T(x)}$. By taking the Hilbert adjoint operator as the involution map $T \mapsto T^*$, \:$\mathbb{A}$ becomes a $C^*$-algebra. Let $d(x, y)=\begin{cases} \begin{pmatrix}
1 &0\\
0&1
\end{pmatrix},\: \text{if} \:x\neq y\\
0, \:\:\:\text{if} \: x=y
\end{cases}$. Then $d$ is a $C^*$-algebra valued metric space. Clearly $d$ does not satisfies the property as in the theorem \ref{homo}.
\end{exmp}
Convergence in a $C^*$-algebra valued normed space is described by the $C^*$-algebra valued metric induced by the $C^*$-algebra valued norm.
\begin{defn}
A sequence $\left\{x_n\right\}\in X$ said to converge an element $x\in X$, if for any $\varepsilon>0$ there exist a $n_0\in \mathbb{N}$ such that $\norm{D(x_n, x)}=\norm{\norm{x_n - x}_{\mathbb{A}}}<\varepsilon$ for all $n> n_0$.
\end{defn}
Hence a sequence $x_n\rightarrow x$ if and only if $\norm{D(x_n, x)}\rightarrow 0$ as $n\rightarrow\infty$.
\begin{rem}
All the results of section \ref{sec 3} viz  Proposition \ref{prop1}, Theorem \ref{cauchypro}, Theorem \ref{3.3}, Theorem \ref{last} also hold if we take the $C^*$-algebra valued normed spaces $( X, \norm{.}_{\mathbb{A}})$ instead of $C^*$-algebra valued metric spaces $( X, \mathbb{A}, d)$
\end{rem}

\end{document}